\pgfplotsset{compat=1.14}
      \def\dR{{\mathbb R}}
\def\bm\chi{\mbox{\boldmath$\chi$}}
\let\xker=\ker \def\ker{{\xker\,}}
\newtheorem{theorem}{Theorem}[section]
\newtheorem{proposition}[theorem]{Proposition}
\newtheorem{corollary}[theorem]{Corollary}
\theoremstyle{remark}
\newtheorem{remark}[theorem]{Remark}
\numberwithin{equation}{section}
\begin{document}

\date{\today}

\author[M.~Derevyagin]{Maxim~Derevyagin}
\address{
MD,
Department of Mathematics\\
University of Connecticut\\
341 Mansfield Road, U-1009\\
Storrs, CT 06269-1009, USA}
\email{maksym.derevyagin@uconn.edu}

\author[N.~Juricic]{Nicholas~Juricic}
\address{
NJ,
Department of Mathematics\\
University of Connecticut\\
341 Mansfield Road, U-1009\\
Storrs, CT 06269-1009, USA}
\email{nicholas.juricic@uconn.edu}

 \subjclass{Primary 33C45; Secondary 39A05; 39A14; 34D05}
\keywords{Jacobi polynomials, Asymptotic formulas, Darboux transformation, difference equations}


\title[Asymptotics of integrals of products of Jacobi polynomials]{A note on an asymptotic formula for integrals of products of Jacobi polynomials}

\begin{abstract}
We recast Byerly's formula for integrals of products of Legendre polynomials. Then we adopt the idea to the case of Jacobi polynomials. After that, we use the formula to derive an asymptotic formula for integrals of products of Jacobi polynomials. The asymptotic formula is similar to an analogous one recently obtained by the first author and Jeff Geronimo for a different case. Thus, it suggests that such an asymptotic behavior is rather generic for integrals of products of orthogonal polynomials. 
\end{abstract}

\maketitle

\section{Introduction}

In \cite{GM15} the Alpert multiresolution analysis was studied and important in this study was the integral
\[
f_{n,m}=\int_0^1 \hat p_n(t)\hat p_m(2t-1)dt,
\]
where $\hat p_n$ is the orthonormal Legendre polynomial to be precisely defined in the next section. These coefficients are entries in the refinement equation associated with the multiresolution analysis and they have been shown to satisfy a variety of difference equations. 
As a matter of fact, it turns out that more general coefficients of the form
\[
u_{n,m}=\int_{\dR} P_n(t)Q_m(\alpha t+\beta) d\sigma(t),
\]
where $\{P_n\}_{n=0}^{\infty}$ and $\{Q_n\}_{n=0}^{\infty}$ are two families of orthonormal polynomials, $\sigma$ is a measure on $\dR$ with finite moments, and the numbers $\alpha\ne 0$, $\beta$ are complex, satisfy a generalized wave equation on the two dimensional lattice
\begin{equation*}
a_{n+1}u_{n+1,m}+b_nu_{n,m}+a_nu_{n-1,m}
=\frac{c_{m+1}}{\alpha}u_{n,m+1}+\frac{d_{m}-\beta}{\alpha}u_{n,m}+\frac{c_m}{\alpha}u_{n,m-1}
\end{equation*}
for $n,m=0$, $1$, $2$, \dots (see \cite{DG} for details).
It was also observed in \cite{DG} that a damped oscillatory behavior took place for such coefficients. In particular, for the coefficients
\[
f^{(\lambda)}_{n,m}=\int_0^1 \hat p^{(\lambda)}_n(t)\hat p^{(\lambda)}_m(2t-1)(t(1-t))^{\lambda-1/2}dt,
\] 
where ${\hat p^{(\lambda)}_n}$ are the orthonormal ultraspherical polynomials and $\lambda>\ -1/2$, it was shown that  
\begin{equation}\label{LegAs}
f^{(\lambda)}_{n,m}=k_m\frac{\cos\left(\pi\left(m + \frac{\lambda}{2} - \frac{n}{2} + \frac{1}{4}\right)\right)}{\sqrt{\pi}n^{\lambda + 1/2}}+O\left(\frac{1}{n^{\lambda+3/2}}\right),
\end{equation}
as $n\to\infty$, where the coefficient $k_m$ can be explicitly found in terms of $m$.

The goal of the present paper is to obtain an asymptotic formula similar to \eqref{LegAs} for the integrals
\[
 \int_{x}^1 P_n^{(\alpha,\beta)}(t)P_m^{(\alpha,\beta)}(t)\, (1-t)^{\alpha}(1+t)^{\beta}dt,
\]
as $n\to\infty$, where $x$, $m$ are fixed, and $P_n^{(\alpha,\beta)}(x)$  are Jacobi polynomials. First, we recast a useful formula by Byerly in the case of Legendre polynomials and then we will derive the desired formula.

\section{Legendre polynomials}

Recall that the Legendre polynomial $P_n$ of degree $n$ is a polynomial solution to the second order differential equation
\begin{equation}\label{LegendreDE}
(1-x^2)y''(x)-2xy'(x)+n(n+1)y(x)=0.
\end{equation}
Actually, it is not so hard to check that for every nonnegative integer $n$ equation \eqref{LegendreDE} has a unique polynomial solution up to a multiplicative constant.  To be definite, throughout this section we assume that $P_n$ is a monic polynomial.


In his book published in 1893, W.E. Byerly noted the following relation for Legendre polynomials that showed itself to be useful in a few instances where Legendre polynomials appear. 

\begin{proposition}[See p. 172 in \cite{B59}] \label{thm:orthogonal1}
Let $n$ and $m$ be nonnegative integers such that $n\ne m$. Then we have that
\begin{equation}\label{IPL1}
 \int_{x}^1P_n(t)P_m(t)\,dt=\frac{(1-x^2)\left[P_m(x){P'_n(x)}-P_n(x){P'_m(x)}\right]}{n(n+1)-m(m+1)},   
\end{equation}
where $x$ is any real number.
\end{proposition}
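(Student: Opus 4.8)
The plan is to exploit the self-adjoint (Sturm--Liouville) form of the Legendre equation \eqref{LegendreDE}. Rewriting \eqref{LegendreDE}, one sees that $P_n$ satisfies
\begin{equation*}
\frac{d}{dt}\left[(1-t^2)P_n'(t)\right]=-n(n+1)P_n(t),
\end{equation*}
and likewise for $P_m$ with $n(n+1)$ replaced by $m(m+1)$. The idea is to combine these two identities into a single exact derivative whose right-hand side is proportional to the product $P_nP_m$, and then integrate over $[x,1]$.

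First I would form the Lagrange-type combination: multiply the equation for $P_n$ by $P_m$, multiply the equation for $P_m$ by $P_n$, and subtract. A direct computation shows that the left-hand side collapses to a total derivative,
\begin{equation*}
\frac{d}{dt}\left[(1-t^2)\bigl(P_m(t)P_n'(t)-P_n(t)P_m'(t)\bigr)\right]
=P_m\frac{d}{dt}[(1-t^2)P_n']-P_n\frac{d}{dt}[(1-t^2)P_m'],
\end{equation*}
the cross terms $(1-t^2)P_m'P_n'$ cancelling. Substituting the two differential equations into the right-hand side, this becomes
\begin{equation*}
\frac{d}{dt}\left[(1-t^2)\bigl(P_m(t)P_n'(t)-P_n(t)P_m'(t)\bigr)\right]
=-\bigl[n(n+1)-m(m+1)\bigr]P_n(t)P_m(t).
\end{equation*}

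Next I would integrate this identity from $x$ to $1$. The antiderivative evaluated at the upper limit $t=1$ vanishes because of the factor $1-t^2$, while all the polynomial factors stay finite; at the lower limit one picks up $-(1-x^2)\bigl(P_m(x)P_n'(x)-P_n(x)P_m'(x)\bigr)$. Equating this to the integral of the right-hand side and dividing by the constant $-\bigl[n(n+1)-m(m+1)\bigr]$ yields exactly \eqref{IPL1}. The hypothesis $n\ne m$ guarantees that this constant is nonzero, since $t\mapsto t(t+1)$ is strictly increasing for $t\ge 0$, so $n(n+1)=m(m+1)$ would force $n=m$.

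I do not expect a serious obstacle here; the only points requiring care are the bookkeeping of signs in the boundary evaluation and the verification that the cross terms cancel in the Lagrange identity. The essential mechanism is simply that the Legendre operator is formally self-adjoint, so that the Wronskian-like quantity $(1-t^2)(P_mP_n'-P_nP_m')$ furnishes an explicit antiderivative for $P_nP_m$ up to a scalar factor.
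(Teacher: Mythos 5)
Your proof is correct and follows essentially the same route as the paper: both form the Lagrange combination of the two Sturm--Liouville forms of \eqref{LegendreDE} and reduce the integral to the boundary term $(1-x^2)\bigl(P_m(x)P_n'(x)-P_n(x)P_m'(x)\bigr)$. The only cosmetic difference is that you collapse the combination into an exact derivative pointwise and then apply the fundamental theorem of calculus, whereas the paper integrates first and then integrates each term by parts; the two manipulations are identical in content.
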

\begin{proof}
Evidently, the polynomials $P_n$ and $P_m$ satisfy the differential equations
\begin{equation}\label{LegDEHelp1}
    \frac{d}{dt}\left[(1-t^2)\frac{dP_n(t)}{dt} \right] = -n(n+1)P_n(t) 
\end{equation}
and 
\begin{equation}\label{LegDEHelp2}
    \frac{d}{dt}\left[(1-t^2)\frac{dP_m(t)}{dt} \right]=- m(m+1)P_m(t).
\end{equation}
Multiplying \eqref{LegDEHelp1} by $P_m$ and \eqref{LegDEHelp2} by $P_n$, respectively, and then subtracting them and integrating the result gives

\begin{gather} \label{eqn:legendreintegral1}
     \left[ n(n+1) - m(m+1)\right] \int_{x}^1 P_n(t)P_m(t)\, dt = \\ 
     =\int_{x}^1 P_n(t)\frac{d}{dt}\left[(1-t^2)\frac{dP_m(t)}{dt}\right] \, dt - \int_{x}^1 P_m(t)\frac{d}{dt}\left[(1-t^2)\frac{dP_n(t)}{dt}\right] \, dt.\nonumber
     \end{gather}
After integrating each of the integrals on the right-hand side of (\ref{eqn:legendreintegral1}) by parts we arrive at \eqref{IPL1}.
    \end{proof}

 Formula \eqref{IPL1} immediately implies the orthogonality of Legendre polynomials.
    
\begin{corollary}
The polynomials $P_n$ and $P_m$ are orthogonal with respect to the Lebesgue measure on $[-1,1]$ provided that $n\ne m$, that is, 
\[
\int_{-1}^1 P_n(t)P_m(t)\,dt=0.
\]
\end{corollary}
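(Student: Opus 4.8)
The plan is to derive the corollary directly from the Byerly formula \eqref{IPL1} in Proposition~\ref{thm:orthogonal1}, rather than recomputing the integral from scratch. The key observation is that \eqref{IPL1} holds for \emph{every} real $x$, so I am free to specialize the lower endpoint. Taking $x=-1$ turns the left-hand side into the full integral $\int_{-1}^1 P_n(t)P_m(t)\,dt$, which is exactly the quantity the corollary concerns.

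First I would substitute $x=-1$ into the right-hand side of \eqref{IPL1}. The prefactor $1-x^2$ vanishes at $x=-1$, while the bracketed expression $P_m(x)P'_n(x)-P_n(x)P'_m(x)$ is a polynomial and hence finite at $x=-1$; there is therefore no indeterminate form, and the numerator is simply $0$.

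Next I would verify that the denominator is nonzero, so that the fraction is genuinely $0$ rather than an illegitimate $0/0$. Since $n$ and $m$ are nonnegative integers with $n\ne m$, and the map $t\mapsto t(t+1)$ is strictly increasing on $[0,\infty)$, we get $n(n+1)\ne m(m+1)$; hence the denominator $n(n+1)-m(m+1)$ does not vanish. Combining the two observations yields $\int_{-1}^1 P_n(t)P_m(t)\,dt=0$, as claimed.

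There is essentially no obstacle here: the entire content of the corollary is already packaged in the factor $1-x^2$ of \eqref{IPL1}, which annihilates the boundary contribution at $x=-1$ (and would do the same at $x=1$). The only point deserving a word of justification is the non-degeneracy of the denominator, namely that distinct nonnegative integers $n,m$ produce distinct eigenvalues $n(n+1)$ and $m(m+1)$; this is immediate from the strict monotonicity of $t\mapsto t(t+1)$.
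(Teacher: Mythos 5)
Your proof is correct and follows exactly the paper's own route: setting $x=-1$ in \eqref{IPL1} so that the factor $1-x^2$ kills the right-hand side, with the denominator $n(n+1)-m(m+1)$ nonzero since $n\ne m$ are nonnegative integers. The paper states this in one line; your added check that the denominator is nonvanishing is a reasonable (if routine) elaboration of the same argument.
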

\begin{proof}
It directly follows from \eqref{IPL1} if we set $x=-1$.
\end{proof}

Another aspect of formula \eqref{IPL1} is that it shows a relation between integrals of products of orthogonal polynomials and the \lq\lq augmented Wronskian" introduced by Karlin and Szeg\H{o}, who were motivated by some probabilistic problems \cite{KS60}. Namely, they studied the polynomials
\[
\varphi_{n}(m; x)=\begin{vmatrix}
P_m(x)&P_n(x)\\
P'_m(x)&P'_n(x)
\end{vmatrix}.
\]
In these notations, formula \eqref{IPL1} reads that
\[
\varphi_{n}(m; x)=\frac{n(n+1)-m(m+1)}{1-x^2}\int_{x}^1P_n(t)P_m(t)\,dt
\]
which gives an integral representation of polynomials $\varphi_n(m; x)$. In addition, Karlin and Szeg\H{o} showed that the polynomials {\it $\varphi_n(m; x)$ satisfy a second order differential equation} \cite[Chapter 4]{KS60}. At the same time, in the literature on integrable systems the family of polynomials $\varphi_n(m; x)$ is called the Darboux transformation of the system $P_n(x)$ (for instance, see \cite{MS91}) and is known to produce solvable equations if the original equation is solvable.   

%

%

\section{Jacobi Polynomials}

In this section we will apply Byerly's idea to the Jacobi differential equation.  To this end, let $P_n^{(\alpha,\beta)}(x)$ be a polynomial solution of the Jacobi differential equation
\begin{equation} \label{eqn:jacobidiffeq}
    (1-x^2)\frac{d^2y}{dx^2} + (\beta - \alpha - (\alpha + \beta + 2)x)\frac{dy}{dx} + n(n + \alpha + \beta + 1)y = 0,
\end{equation}
where we assume that $\alpha$, $\beta>-1$. It is clear that such a definition determines a polynomial up to a constant factor. So, to be more precise, one can check that 
\begin{equation}\label{JacobiExplicit}
P_n^{(\alpha,\beta)}(x)=\frac{(\alpha+1)_n}{n!}\sum_{k=0}^{n}\frac{(-n)_k(\alpha+\beta+n+1)_k}{(\alpha+1)_k}\left(\frac{1-x}{2}\right)^k
\end{equation}
is a solution to \eqref{eqn:jacobidiffeq}, where $(a)_k$ is the Pochhammer symbol, that is, $(a)_0=1$ and $(a)_k=a(a+1)...(a+k-1)$ for $k=1, 2, 3, \dots$. Besides, formula \eqref{JacobiExplicit} defines the Jacobi polynomial $P_n^{(\alpha,\beta)}(x)$ uniquely. 

Next, following Karlin and Szeg\H{o} \cite{KS60} let us introduce the \lq\lq augmented Wronskian" of the form
\[
\varphi_{n}(m; \alpha, \beta, x)=\begin{vmatrix}
P_m^{(\alpha,\beta)}(x)&P_n^{(\alpha,\beta)}(x)\\
\dfrac{dP_m^{(\alpha,\beta)}(x)}{dx}&\dfrac{dP_n^{(\alpha,\beta)}(x)}{dx}
\end{vmatrix}.
\]

\begin{remark} The polynomials $\varphi_{n}(m; \alpha, \beta, x)$ are in fact the Darboux transform of $P_n^{(\alpha,\beta)}$ (for the definition of the Darboux transform see \cite{MS91}). Also, the polynomials $\varphi_{n}(m; \alpha, \beta, x)$ are a particular case of the generalized Jacobi polynomials introduced in \cite{B19}. 
\end{remark}
 
\begin{theorem}
Let $\alpha$, $\beta>-1$. Then, for any nonnegative integers $n$ and $m$ such that $n\ne m$, we have
\begin{equation} \label{IPL3}
    \int_x^1 P_n^{(\alpha,\beta)}(t)P_m^{(\alpha,\beta)}(t)\, w(t) dt = 
    \frac{w(x)(1-x^2)\varphi_{n}(m; \alpha, \beta, x)}{n(n+\alpha+\beta+1)-m(m+\alpha+\beta+1)},  
\end{equation}
where $w(t) = (1-t)^{\alpha}(1+t)^{\beta}$ and $x$ is any real number.
\end{theorem}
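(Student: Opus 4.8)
The plan is to follow exactly the strategy used for the Legendre case in Proposition~\ref{thm:orthogonal1}, the only new ingredient being the correct self-adjoint (Sturm--Liouville) form of the Jacobi equation \eqref{eqn:jacobidiffeq}. First I would check that, upon multiplying \eqref{eqn:jacobidiffeq} by the weight $w(t)=(1-t)^{\alpha}(1+t)^{\beta}$, the equation can be written as
\begin{equation*}
\frac{d}{dt}\left[(1-t)^{\alpha+1}(1+t)^{\beta+1}\frac{dy}{dt}\right]+n(n+\alpha+\beta+1)\,w(t)\,y=0.
\end{equation*}
Indeed, writing $p(t)=(1-t)^{\alpha+1}(1+t)^{\beta+1}=w(t)(1-t^2)$, a direct differentiation gives $p'(t)/w(t)=(\beta-\alpha)-(\alpha+\beta+2)t$, which is precisely the coefficient of $y'$ in \eqref{eqn:jacobidiffeq}, while $p(t)/w(t)=1-t^2$ recovers the coefficient of $y''$. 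This identifies $p$ and confirms the self-adjoint form.

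Having this, I would set $\lambda_n=n(n+\alpha+\beta+1)$ and $\lambda_m=m(m+\alpha+\beta+1)$ and write the two Sturm--Liouville identities for $P_n^{(\alpha,\beta)}$ and $P_m^{(\alpha,\beta)}$. Multiplying the $n$-identity by $P_m^{(\alpha,\beta)}$ and the $m$-identity by $P_n^{(\alpha,\beta)}$, subtracting, and integrating over $[x,1]$ yields
\begin{equation*}
(\lambda_n-\lambda_m)\int_x^1 P_n^{(\alpha,\beta)}(t)P_m^{(\alpha,\beta)}(t)\,w(t)\,dt
=\int_x^1\left(P_n^{(\alpha,\beta)}\frac{d}{dt}[p\,{P_m^{(\alpha,\beta)}}']-P_m^{(\alpha,\beta)}\frac{d}{dt}[p\,{P_n^{(\alpha,\beta)}}']\right)dt.
\end{equation*}
The integrand on the right is, by the Lagrange identity, the exact derivative $\frac{d}{dt}\big[p(t)(P_n^{(\alpha,\beta)}{P_m^{(\alpha,\beta)}}'-P_m^{(\alpha,\beta)}{P_n^{(\alpha,\beta)}}')\big]$, so the integral collapses to a boundary term $\big[p(t)(P_n^{(\alpha,\beta)}{P_m^{(\alpha,\beta)}}'-P_m^{(\alpha,\beta)}{P_n^{(\alpha,\beta)}}')\big]_x^1$. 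Recognizing that $P_n^{(\alpha,\beta)}{P_m^{(\alpha,\beta)}}'-P_m^{(\alpha,\beta)}{P_n^{(\alpha,\beta)}}'=-\varphi_n(m;\alpha,\beta,x)$ at $t=x$ and dividing by $\lambda_n-\lambda_m$ (nonzero because $n\ne m$) would then produce \eqref{IPL3}.

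The only genuine point requiring care --- and the step I expect to be the main obstacle --- is the behavior at the upper endpoint $t=1$, where the weight $w$ may be singular when $-1<\alpha<0$. Here one must observe that the boundary factor is not $w$ but $p(t)=(1-t)^{\alpha+1}(1+t)^{\beta+1}$, whose exponent $\alpha+1>0$ forces $p(1)=0$; thus the $t=1$ contribution vanishes and only the $t=x$ term survives, giving $p(x)\varphi_n(m;\alpha,\beta,x)=w(x)(1-x^2)\varphi_n(m;\alpha,\beta,x)$. One should also note in passing that the integral itself converges, since near $t=1$ the integrand is $O((1-t)^{\alpha})$ with $\alpha>-1$, so all the manipulations are legitimate. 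With these observations the proof is complete.
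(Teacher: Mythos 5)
Your proof is correct: the self-adjoint form is established by exactly the right computation ($p'(t)=w(t)\left[(\beta-\alpha)-(\alpha+\beta+2)t\right]$ with $p=(1-t^2)w$), the Lagrange identity does turn the cross-multiplied difference into the exact derivative $\frac{d}{dt}\left[p\,\bigl(P_n^{(\alpha,\beta)}{P_m^{(\alpha,\beta)}}'-P_m^{(\alpha,\beta)}{P_n^{(\alpha,\beta)}}'\bigr)\right]$, and your sign bookkeeping ($P_n^{(\alpha,\beta)}{P_m^{(\alpha,\beta)}}'-P_m^{(\alpha,\beta)}{P_n^{(\alpha,\beta)}}'=-\varphi_n$) yields \eqref{IPL3} after dividing by $n(n+\alpha+\beta+1)-m(m+\alpha+\beta+1)\ne 0$. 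The paper organizes the argument differently, though the content is the same: it never passes to Sturm--Liouville form, but instead cross-multiplies \eqref{eqn:jacobi1} and \eqref{eqn:jacobi2} as they stand, recognizes the difference as a first-order equation for the Wronskian, namely \eqref{equation2}, then multiplies by $w$, integrates by parts once, and closes by checking that the two integrals left over in \eqref{generalweight} cancel. That cancellation is equivalent to the pointwise identity $(1-t^2)w'(t)=(\beta-\alpha-(\alpha+\beta)t)w(t)$, which is precisely the computation you did up front when confirming that $(1-t^2)w$ is the integrating factor; so the two proofs use the same identity, inserted at different stages. Your ordering buys two things: the integral collapses by the fundamental theorem of calculus with nothing left to cancel, and you are forced to examine the endpoint $t=1$, where you correctly note that the boundary factor is $p(t)=(1-t)^{\alpha+1}(1+t)^{\beta+1}$, which vanishes at $t=1$ even when $w$ itself is unbounded there ($-1<\alpha<0$) --- a point the paper passes over silently when it discards that boundary term. (Yours is also the more faithful generalization of the paper's own Legendre argument, Proposition~\ref{thm:orthogonal1}.) What the paper's ordering buys is that \eqref{equation2} exhibits explicitly the first-order ODE satisfied by the Darboux transform $\varphi_{n}(m;\alpha,\beta,t)$ itself, which ties into the Karlin--Szeg\H{o} theme running through the rest of the paper.
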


\begin{proof}
By definition, the polynomials $P_n^{(\alpha,\beta)}$ and $P_m^{(\alpha,\beta)}$ satisfy the differential equations 
\begin{equation} \label{eqn:jacobi1}
    (1-t^2)\frac{d^2P_n^{(\alpha,\beta)}(t)}{dt^2} + (\beta - \alpha - (\alpha + \beta + 2)t)\frac{dP_n^{(\alpha,\beta)}(t)}{dt} + n(n+\alpha + \beta + 1)P_n^{(\alpha,\beta)}(t) = 0
\end{equation}
and 
\begin{equation} \label{eqn:jacobi2}
    (1-t^2)\frac{d^2P_m^{(\alpha,\beta)}(t)}{dt^2} + (\beta - \alpha - (\alpha + \beta + 2)t)\frac{dP_m^{(\alpha,\beta)}(t)}{dt} + m(m+\alpha + \beta + 1)P_m^{(\alpha,\beta)}(t) = 0.
\end{equation}
We multiply (\ref{eqn:jacobi1}) by $P_m^{(\alpha,\beta)}$ and (\ref{eqn:jacobi2}) by $P_n^{(\alpha,\beta)}$, respectively, and then subtract the results to get a differential equation for $P_n^{(\alpha,\beta)}(t)P_m^{(\alpha,\beta)}(t)$. 
After using the product rule on $\varphi_{n}(m; \alpha, \beta, x)$ and rewriting the differential equation becomes 
\begin{equation}\label{equation2}
     -CP_n^{(\alpha,\beta)}(t)P_m^{(\alpha,\beta)}(t) = (1-t^2)\varphi'_{n}(m; \alpha, \beta, t) + (\beta - \alpha - (\alpha + \beta + 2)t)\varphi_{n}(m; \alpha, \beta, t),
\end{equation}
where $C=n(n+\alpha+\beta+1)-m(m+\alpha+\beta+1)$.
We now multiply (\ref{equation2}) by $w(t)$ and integrate over $[x,1]$ to obtain
\begin{equation*}
\begin{split}
    -C\int_x^1P_n^{(\alpha,\beta)}(t)P_m^{(\alpha,\beta)}(t)\,w(t)dt = \int_x^1 (1-t^2)\varphi'_{n}(m; \alpha, \beta, t)\,w(t)dt \\
    + \int_x^1(\beta - \alpha - (\alpha + \beta + 2)t)\varphi_{n}(m; \alpha, \beta, t)\,w(t)dt.
  \end{split}
\end{equation*}
After handling the left integral on the right-hand side by parts, we are left with 
\begin{equation} \label{generalweight}
\begin{split}
    -C\int_x^1P_n^{(\alpha,\beta)}(t)P_m^{(\alpha,\beta)}(t)\,w(t)dt = -(1-x^2)w(x)\varphi_{n}(m; \alpha, \beta, x) \\
    -\int_x^1 \varphi_{n}(m; \alpha, \beta, t)(1-t^2)\, w'(t)dt 
    +     \int_x^1 (\beta - \alpha - (\alpha + \beta)t)\varphi_{n}(m; \alpha, \beta, t)\, w(t)dt.
    \end{split}
\end{equation}
Then one can check that 
\begin{equation*}
  \int_x^1 \varphi_{n}(m; \alpha, \beta, t)(1-t^2)\, w'(t)dt = \int_x^1 (\beta - \alpha - (\alpha + \beta)t)\varphi_{n}(m; \alpha, \beta, t)\, w(t)dt, 
\end{equation*}
which completes the proof.
\end{proof}

As in the case of Legendre polynomials, formula \eqref{IPL3} immediately leads to orthogonality.

\begin{corollary}\label{jacobicorollary}
Let $\alpha$, $\beta>-1$. Then the polynomials $P_n^{(\alpha,\beta)}$ and $P_m^{(\alpha,\beta)}$ are orthogonal with respect to the weight $w(x) = (1-x)^\alpha(1+x)^\beta$ on $[-1,1]$ provided that $n\ne m$, that is, 
\begin{equation*}
    \int_{-1}^1P_n^{(\alpha,\beta)}(t)P_m^{(\alpha,\beta)}(t)\,(1-t)^\alpha(1+t)^\beta\, dx = 0.
\end{equation*}
\end{corollary}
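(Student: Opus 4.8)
The plan is to mimic the Legendre corollary by specializing the master formula \eqref{IPL3} to the left endpoint $x=-1$. Upon this substitution the left-hand side becomes exactly the orthogonality integral $\int_{-1}^1 P_n^{(\alpha,\beta)}(t)P_m^{(\alpha,\beta)}(t)\,w(t)\,dt$, so everything reduces to showing that the right-hand side vanishes at $x=-1$.

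The key algebraic observation is that the prefactor $w(x)(1-x^2)$ simplifies: since $w(x)=(1-x)^\alpha(1+x)^\beta$, we have $w(x)(1-x^2)=(1-x)^{\alpha+1}(1+x)^{\beta+1}$. Because $\beta>-1$, the exponent $\beta+1$ is strictly positive, so this combined factor tends to $0$ as $x\to -1$, even in the delicate regime $-1<\beta<0$ where $w$ itself is unbounded near $-1$. This is the one point where the argument differs from the Legendre case, in which the vanishing came from the factor $1-x^2$ alone.

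It remains to check that nothing on the right-hand side compensates for this vanishing. The augmented Wronskian $\varphi_n(m;\alpha,\beta,x)$ is a polynomial, hence finite at $x=-1$, and the denominator factors as $(n-m)(n+m+\alpha+\beta+1)$, which is nonzero since $n\ne m$ forces $n-m\ne0$, while the distinctness of $n,m$ gives $n+m\ge 1$ and hence $n+m+\alpha+\beta+1>0$. Thus the entire right-hand side equals $0$, and the orthogonality follows.

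The only subtlety to address is the legitimacy of evaluating \eqref{IPL3} at the endpoint $x=-1$. Since $\alpha,\beta>-1$, the weight $w$ is Lebesgue integrable on $[-1,1]$, so the integral $\int_x^1 P_n^{(\alpha,\beta)}(t)P_m^{(\alpha,\beta)}(t)\,w(t)\,dt$ depends continuously on $x$ as $x\to -1^+$; taking this one-sided limit on both sides of \eqref{IPL3} then yields the claim. I expect this integrability-and-continuity check to be the only mild obstacle, as the rest is immediate from the formula already established.
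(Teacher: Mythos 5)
Your proof is correct and takes essentially the same route as the paper, which (exactly as in the Legendre case, where $x=-1$ is substituted into \eqref{IPL1}) obtains the corollary by evaluating \eqref{IPL3} at $x=-1$. Your added care---rewriting the prefactor as $w(x)(1-x^2)=(1-x)^{\alpha+1}(1+x)^{\beta+1}$ so that $\beta+1>0$ forces it to vanish even when $-1<\beta<0$ makes $w$ unbounded, checking that the denominator $(n-m)(n+m+\alpha+\beta+1)$ is nonzero, and passing to the limit $x\to-1^{+}$ via integrability of $w$---just fills in details the paper treats as immediate.
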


The following result is another consequence of formula \eqref{IPL3}, and a well-known form of this result will be used in the next section. 

\begin{corollary}
Let $\alpha$, $\beta>-1$ and let $w(t) = (1-t)^{\alpha}(1+t)^{\beta}$. Then for any $n,m \geq 1$ and $n \neq m$, we have 
\begin{equation} \label{IPL4}
    \int_{-1}^1 \left( \int_x^1 P_n^{(\alpha,\beta)}(t)w(t)dt \int_x^1 P_m^{(\alpha,\beta)}(t)w(t)dt \right) \frac{dx}{(1-x^2)w(t)} = 0.
\end{equation}
\end{corollary}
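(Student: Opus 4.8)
The plan is to reduce \eqref{IPL4} to the orthogonality of the derivatives $\frac{d}{dx}P_n^{(\alpha,\beta)}$, and to obtain both the needed closed form for the inner integrals and that orthogonality from results already established, namely \eqref{IPL3} and Corollary~\ref{jacobicorollary}. First I would evaluate the inner integrals by specializing \eqref{IPL3} to $m=0$. Since $P_0^{(\alpha,\beta)}\equiv 1$, we have
\[
\varphi_n(0;\alpha,\beta,x)=\begin{vmatrix}1 & P_n^{(\alpha,\beta)}(x)\\[2pt] 0 & \dfrac{dP_n^{(\alpha,\beta)}(x)}{dx}\end{vmatrix}=\frac{dP_n^{(\alpha,\beta)}(x)}{dx},
\]
so that \eqref{IPL3} gives
\[
\int_x^1 P_n^{(\alpha,\beta)}(t)\,w(t)\,dt=\frac{(1-x^2)w(x)}{n(n+\alpha+\beta+1)}\,\frac{dP_n^{(\alpha,\beta)}(x)}{dx},
\]
and the analogous identity for $m$. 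This is exactly where the hypothesis $n,m\ge 1$ is used, since the denominators $n(n+\alpha+\beta+1)$ and $m(m+\alpha+\beta+1)$ must be nonzero.

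Substituting both closed forms into the left-hand side of \eqref{IPL4}, the product of the two numerators contributes a factor $(1-x^2)^2w(x)^2$, one factor of $(1-x^2)w(x)$ of which is cancelled by the measure $\frac{dx}{(1-x^2)w(x)}$. What remains is
\[
\frac{1}{n(n+\alpha+\beta+1)\,m(m+\alpha+\beta+1)}\int_{-1}^1 (1-x^2)w(x)\,\frac{dP_n^{(\alpha,\beta)}(x)}{dx}\,\frac{dP_m^{(\alpha,\beta)}(x)}{dx}\,dx.
\]
Hence it suffices to show that the derivatives of distinct Jacobi polynomials are orthogonal with respect to the weight $(1-x^2)w(x)=(1-x)^{\alpha+1}(1+x)^{\beta+1}$.

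For this last step I would pass to the self-adjoint form of \eqref{eqn:jacobidiffeq}, namely $\frac{d}{dx}\bigl[(1-x^2)w(x)\,\frac{d}{dx}P_n^{(\alpha,\beta)}\bigr]=-\,n(n+\alpha+\beta+1)\,w(x)\,P_n^{(\alpha,\beta)}$ (already implicit in the derivation of \eqref{IPL3}), and integrate by parts:
\[
\int_{-1}^1 (1-x^2)w(x)\,\frac{dP_n^{(\alpha,\beta)}}{dx}\,\frac{dP_m^{(\alpha,\beta)}}{dx}\,dx = -\int_{-1}^1 P_m^{(\alpha,\beta)}\,\frac{d}{dx}\Bigl[(1-x^2)w(x)\,\frac{dP_n^{(\alpha,\beta)}}{dx}\Bigr]\,dx.
\]
Replacing the bracket via the self-adjoint form turns the right-hand side into $n(n+\alpha+\beta+1)\int_{-1}^1 P_n^{(\alpha,\beta)}P_m^{(\alpha,\beta)}\,w(x)\,dx$, which vanishes for $n\ne m$ by Corollary~\ref{jacobicorollary}.

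The only point requiring genuine care is the vanishing of the boundary terms in the integration by parts, which holds because $(1-x^2)w(x)=(1-x)^{\alpha+1}(1+x)^{\beta+1}$ vanishes at $x=\pm1$; this is precisely where $\alpha,\beta>-1$ is needed. Everything else is bookkeeping. As an alternative to the integration by parts, one could invoke the well-known fact that $\frac{d}{dx}P_n^{(\alpha,\beta)}$ is a constant multiple of $P_{n-1}^{(\alpha+1,\beta+1)}$, so that the orthogonality of the derivatives is immediate from Corollary~\ref{jacobicorollary} applied with parameters $(\alpha+1,\beta+1)$ and indices $n-1\ne m-1$.
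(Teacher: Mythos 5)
Your proof is correct, but it takes a genuinely different route from the paper's. The paper substitutes the closed form from \eqref{IPL3} (with second index $0$) into only \emph{one} of the two inner integrals; the resulting cancellation leaves, up to a constant, $\int_{-1}^1 \bigl(P_n^{(\alpha,\beta)}(x)\bigr)'\int_x^1 P_m^{(\alpha,\beta)}(t)\,w(t)\,dt\,dx$, which is then handled by a single integration by parts together with the fundamental theorem of calculus: the boundary term at $x=-1$ vanishes because $\int_{-1}^1 P_m^{(\alpha,\beta)}(t)\,w(t)\,dt=0$ (orthogonality to constants, which is where $m\ge 1$ enters), and the remaining integral vanishes by Corollary~\ref{jacobicorollary}. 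You instead substitute the closed form into \emph{both} inner integrals, reducing \eqref{IPL4} to the orthogonality of the derivatives with respect to the weight $(1-x)^{\alpha+1}(1+x)^{\beta+1}$ --- which is precisely the reformulation \eqref{phiOrt} that the paper records only in the remark \emph{after} its proof --- and then you prove that orthogonality by passing to the self-adjoint form of \eqref{eqn:jacobidiffeq} and integrating by parts, with the boundary terms killed by the vanishing of $(1-x)^{\alpha+1}(1+x)^{\beta+1}$ at $\pm 1$. Your route costs one extra step (the Sturm--Liouville argument), but it is symmetric in $n$ and $m$, makes the equivalence with \eqref{phiOrt} explicit, and shows directly that the integral in \eqref{IPL4} exists, since the reduced integrand is a polynomial times an integrable weight. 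One caveat about your alternative ending: within this paper the shift formula \eqref{shift} is itself \emph{deduced} from \eqref{phiOrt}, i.e.\ from the corollary you are proving, so invoking it here would be circular unless you treat it, as you indicate, as the classical fact verifiable independently from \eqref{JacobiExplicit}; your primary argument via the self-adjoint form has no such issue.
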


\begin{remark}
It follows from \eqref{IPL3} that
\[
\int_x^1 P_n^{(\alpha,\beta)}(t)w(t)dt =\frac{(1-x^2)w(x)\varphi_{n}(0; \alpha, \beta, x)}{n(n+\alpha+\beta+1)}
\]
and thus \eqref{IPL4} can be rewritten in the following manner
\begin{equation}\label{phiOrt}
\int_{-1}^1\varphi_{n}(0; \alpha, \beta, t)\varphi_{m}(0; \alpha, \beta, t)(1-t^2)w(t)dt=0,
\end{equation}
which in particular shows that the integral from \eqref{IPL4} exists. 
\end{remark}

\begin{proof}
To apply formula~\eqref{IPL3}, note that $n > 0$ and $P_0^{(\alpha,\beta)} \equiv 1$ to get the identity
\begin{equation}
    \int_x^1 P_n^{(\alpha,\beta)}(t) \, w(t) dt = \frac{(1-x^2)w(x)(P_n^{\alpha,\beta)}(x))'}{n(n+\alpha+\beta+1)}.
\end{equation}
Substituting this expression into (\ref{IPL4}) yields a cancellation, leaving
\begin{equation*}
    \frac{1}{n(n+\alpha+\beta+1)}\int_{-1}^1 \left( (P_n^{(\alpha,\beta)}(x))' \int_x^1 P_m^{(\alpha,\beta)}(t)\,w(t)dt \right)dx.
\end{equation*}
After integrating by parts and using the fundamental theorem of calculus on the term
\[
\int_x^1 P_m^{(\alpha,\beta)}(t)\,w(t)dt, 
\]
the integral in (\ref{IPL4}) becomes (up to a constant factor which is not important since we have 0 on the right-hand side) 
\begin{equation*}
    \left(P_n^{(\alpha,\beta)}(x)\int_x^1 P_m^{(\alpha,\beta)}(t)\,w(t)dt\right) \Big|_{x=-1}^{x=1} +\int_{-1}^{1} P_n^{(\alpha,\beta)}(x)P_m^{(\alpha,\beta)}(x)\, w(x)dx = 0.
\end{equation*}
Observe that $\int_{-1}^{1} P_m^{(\alpha,\beta)}(t)\,w(t)dt = 0$ and $\int_{-1}^{1} P_n^{(\alpha,\beta)}(x)P_m^{(\alpha,\beta)}(x)\, w(x)dx = 0$ using Corollary~\ref{jacobicorollary}.
\end{proof}
\begin{remark}
Since 
\[
\varphi_{n}(0; \alpha, \beta, x)=(P_n^{(\alpha,\beta)}(x))'
\]
formula \eqref{phiOrt} implies the following well-known formula
\begin{equation}\label{shift}
    \frac{d}{dx}\left[ P_n^{(\alpha,\beta)}(x) \right] = \left( \frac{n + \alpha + \beta + 1}{2} \right)P_{n-1}^{(\alpha+1,\beta+1)}(x).
\end{equation}
Here we are taking into account the normalization given by \eqref{JacobiExplicit}.
\end{remark}

\section{Asymptotic formula for the integrals in question}

In this section we will obtain an asymptotic formula for the integral 
\[
\int_x^1 P_n^{(\alpha,\beta)}(t)P_m^{(\alpha,\beta)}(t)\, w(t) dt
\]
when $x$ and $m$ are fixed and $n\to\infty$. To begin with, let us recall a classical asymptotic result for Jacobi polynomials.
\begin{proposition}[\cite{I09}] \label{mourad}
Let $\alpha,\beta > -1$ and set 
\begin{equation*}
    N = n + \frac{\alpha + \beta + 1}{2},\quad \gamma = -\frac{(\alpha + 1/2)\pi}{2}.
\end{equation*}
Then for a fixed $\theta\in(0,\pi)$, we have 
\begin{equation}
    P_n^{(\alpha,\beta)}(\cos\theta) = \frac{k(\theta)}{\sqrt{n}}\cos(N\theta + \gamma) + O(n^{-3/2}),
\end{equation}
as $n\to \infty$, where
\begin{equation}
    k(\theta) = \frac{1}{\sqrt{\pi}}[\sin(\theta/2)]^{-\alpha - 1/2}[\cos(\theta/2)]^{-\beta - 1/2}.
\end{equation}
\end{proposition}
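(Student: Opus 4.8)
The plan is to reduce the Jacobi equation \eqref{eqn:jacobidiffeq} to a trigonometric Sturm--Liouville normal form and then invoke the Liouville--Green (WKB) approximation, so that the amplitude $k(\theta)$ emerges automatically and only a constant and a phase remain to be pinned down. First I would rewrite \eqref{eqn:jacobidiffeq} in the self-adjoint form
\[
\frac{d}{dx}\left[(1-x^2)w(x)\frac{dy}{dx}\right] + n(n+\alpha+\beta+1)\,w(x)\,y = 0,
\]
with $w(x)=(1-x)^\alpha(1+x)^\beta$, which is the symmetric form underlying \eqref{IPL3}. The Liouville change of variables is then $x=\cos\theta$, because $\int dx/\sqrt{1-x^2}=\theta$, together with the new unknown $u(\theta)=[(1-x^2)\,w(x)^2]^{1/4}y$. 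A direct computation gives
\[
[(1-x^2)\,w(x)^2]^{1/4}=2^{(\alpha+\beta+1)/2}[\sin(\theta/2)]^{\alpha+1/2}[\cos(\theta/2)]^{\beta+1/2},
\]
which is, up to a positive constant factor, the reciprocal of $k(\theta)$; thus recovering $y=P_n^{(\alpha,\beta)}(\cos\theta)$ from $u$ automatically reintroduces a factor proportional to $k(\theta)$.

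Next I would check that $u$ satisfies a normal-form equation
\[
\frac{d^2u}{d\theta^2}+\bigl(N^2-q(\theta)\bigr)u=0,
\]
where $N=n+(\alpha+\beta+1)/2$ and $q(\theta)$ does not depend on $n$ and is bounded on compact subsets of $(0,\pi)$; here one uses the identity $n(n+\alpha+\beta+1)=N^2-\tfrac14(\alpha+\beta+1)^2$ and absorbs the resulting constant into $q$. Since $q$ is bounded away from the endpoints, the Liouville--Green theory produces, for fixed $\theta\in(0,\pi)$, a solution of amplitude $(N^2-q)^{-1/4}\sim N^{-1/2}$ and phase $N\theta+O(N^{-1})$, so that $u(\theta)=A\,N^{-1/2}\cos(N\theta+\delta)+O(N^{-3/2})$ for constants $A$ and $\delta$ independent of $\theta$. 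Undoing the transformation yields
\[
P_n^{(\alpha,\beta)}(\cos\theta)=A'\,\frac{k(\theta)}{\sqrt n}\cos(N\theta+\delta)+O(n^{-3/2}),
\]
so the correct $\theta$-dependence and the decay $n^{-1/2}$ are already in place, and only the constant $A'$ and the phase $\delta$ remain to be determined.

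The main obstacle is exactly this \emph{connection problem}: the interior Liouville--Green expansion degenerates at the turning point $\theta=0$ (that is, $x=1$), so neither $A'$ nor $\delta$ can be extracted from the interior analysis alone. To fix them I would analyze the equation near $\theta=0$, where $q(\theta)\sim(\alpha^2-\tfrac14)/(4\sin^2(\theta/2))$ and the normal form degenerates into a Bessel equation; the associated Mehler--Heine limit writes $P_n^{(\alpha,\beta)}(\cos(z/N))$ in terms of $z^{-\alpha}J_\alpha(z)$, normalized through the value $P_n^{(\alpha,\beta)}(1)=(\alpha+1)_n/n!$ coming from \eqref{JacobiExplicit}. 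Matching the large-argument behavior $J_\alpha(z)\sim\sqrt{2/(\pi z)}\cos(z-\tfrac{\alpha\pi}{2}-\tfrac{\pi}{4})$ against the interior oscillation then forces $A'=1$ and $\delta=\gamma=-\tfrac{\alpha\pi}{2}-\tfrac{\pi}{4}=-(\alpha+\tfrac12)\pi/2$, together with the normalizing constant $1/\sqrt{\pi}$ sitting inside $k(\theta)$. An alternative that bypasses the turning-point matching is Darboux's method applied to the generating function of $P_n^{(\alpha,\beta)}$: the conjugate singularities at $t=e^{\pm i\theta}$ on the circle of convergence contribute the oscillatory factor $\cos(N\theta+\gamma)$, and a local expansion at each singularity delivers the amplitude $k(\theta)$ and the phase $\gamma$ simultaneously.
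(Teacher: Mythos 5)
First, a point of comparison: the paper does not prove this proposition at all — it is quoted from Ismail's book \cite{I09} as a classical result (it is Darboux's formula for Jacobi polynomials, Theorem 8.21.8 in Szeg\H{o}'s book), and the paper's own work begins only after it. So there is no in-paper argument to measure your sketch against; the question is whether your outline would stand as a proof on its own.

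Your architecture is the standard one and the computations you quote are correct: the self-adjoint form, the Liouville substitution $u=[(1-x^2)w(x)^2]^{1/4}y$ with $x=\cos\theta$, the normal form $u''+(N^2-q(\theta))u=0$ with $q$ independent of $n$ and singular only at the endpoints, and the identification of $[(1-x^2)w^2]^{1/4}$ with a constant multiple of $1/k(\theta)$. The genuine gap is in the connection step, and it is not a technicality that matching "forces" the constants: the interior Liouville--Green expansion you invoke is valid for $\theta$ in a fixed compact subset of $(0,\pi)$, with amplitude and phase constants $A_n,\delta_n$ that a priori depend on $n$, while the Mehler--Heine formula is valid for $z=N\theta$ in a bounded set, i.e.\ $\theta=O(1/N)$. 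These two regimes do not overlap, so there is no value of $\theta$ at which both asymptotics hold simultaneously, and comparing the large-$z$ behavior of $J_\alpha$ with the interior oscillation is not licensed by the two statements as you use them. Closing this requires uniformity: either an error-controlled Mehler--Heine expansion valid for $z$ growing like a power of $N$, or, more cleanly, Olver's uniform turning-point asymptotics, which represents $u$ in terms of Bessel functions uniformly on $(0,\pi-\epsilon]$ and delivers the constant $1/\sqrt{\pi}$ and the phase $\gamma$ in one stroke. Alternatively, the second route you mention — Darboux's method applied to the generating function, with the conjugate singularities at $t=e^{\pm i\theta}$ — avoids the connection problem entirely and is in fact the classical rigorous proof behind the cited reference; if you carry that out, the WKB discussion becomes motivation rather than proof.
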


To apply the above statement to our integrals, note that combining \eqref{IPL3} and \eqref{shift} gives
\begin{equation}\label{help1}
\begin{split}
\frac{n(n+\alpha+\beta+1)-m(m+\alpha+\beta+1)}{(1-x^2)w(x)}\int_x^1 P_n^{(\alpha,\beta)}(t)P_m^{(\alpha,\beta)}(t)\, w(t) dt=\\
\frac{n+\alpha+\beta+1}{2} P_{n-1}^{(\alpha+1,\beta+1)}(x)P_m^{(\alpha,\beta)}(x) -  \frac{m+\alpha+\beta+1}{2} P_{m-1}^{(\alpha+1,\beta+1)}(x)P_n^{(\alpha,\beta)}(x).
\end{split}
\end{equation}

Then, the latter formula and Proposition~\ref{mourad} lead to the following result.

\begin{theorem}
Let $\alpha$, $\beta$, $N$, and $\gamma$ be as in Proposition~\ref{mourad}. Suppose $n$ and $m$ are nonnegative integers, $m$ is fixed, and $n \neq m$. Then for a fixed $\theta\in(0, \pi)$ we have
\begin{equation}\label{TheAsymp}
    \int_{\cos\theta}^1 P_n^{(\alpha,\beta)}(t)P_m^{(\alpha,\beta)}(t)\, w(t)dt = \frac{\ell(\theta)}{n^{3/2}}\sin(N\theta + \gamma) + O(n^{-5/2})
\end{equation}
as $n\to\infty$, where 
\[
\displaystyle \ell(\theta) = \frac{2^{\alpha + \beta+1}}{\sqrt{\pi}}\left[\sin(\theta /2)\right]^{\alpha+1/2}\left[\cos(\theta/2)\right]^{\beta +1/2}P_m^{(\alpha,\beta)}(\cos\theta).
\] 
\end{theorem}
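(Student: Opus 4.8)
The key is equation \eqref{help1}, which already expresses the integral (times an explicit factor) as a combination of products of Jacobi polynomials evaluated at the fixed point $x=\cos\theta$. So the plan is to substitute the Szegő-type asymptotics from Proposition~\ref{mourad} into the right-hand side of \eqref{help1}, solve for the integral, and track the leading term.

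\smallskip

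\emph{Step 1: Rewrite \eqref{help1} with $x = \cos\theta$.} Denote the left prefactor $C(n) = n(n+\alpha+\beta+1) - m(m+\alpha+\beta+1)$, which is $\sim n^2$ as $n\to\infty$. The right-hand side contains two terms: one with coefficient $(n+\alpha+\beta+1)/2 \sim n/2$ multiplying $P_{n-1}^{(\alpha+1,\beta+1)}(\cos\theta)\,P_m^{(\alpha,\beta)}(\cos\theta)$, and one with the fixed coefficient $(m+\alpha+\beta+1)/2$ multiplying $P_{m-1}^{(\alpha+1,\beta+1)}(\cos\theta)\,P_n^{(\alpha,\beta)}(\cos\theta)$. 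Since $P_m^{(\alpha,\beta)}(\cos\theta)$ and $P_{m-1}^{(\alpha+1,\beta+1)}(\cos\theta)$ are $O(1)$ constants, the first term is of order $n \cdot P_{n-1}^{(\alpha+1,\beta+1)}(\cos\theta)$ and the second is $O\!\left(P_n^{(\alpha,\beta)}(\cos\theta)\right)$.

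\smallskip

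\emph{Step 2: Apply Proposition~\ref{mourad} to both $n$-dependent factors.} For $P_n^{(\alpha,\beta)}(\cos\theta)$ use the proposition directly: it is $O(n^{-1/2})$, so the second term on the right-hand side is $O(n^{-1/2})$ and, after dividing by $C(n)\sim n^2$, contributes $O(n^{-5/2})$ — exactly the error order claimed. The main term must therefore come from $P_{n-1}^{(\alpha+1,\beta+1)}(\cos\theta)$. Applying the proposition with parameters $(\alpha+1,\beta+1)$ and index $n-1$: the new $N$-value is $(n-1) + \frac{(\alpha+1)+(\beta+1)+1}{2} = n + \frac{\alpha+\beta+1}{2} = N$, which conveniently reproduces the same $N$, while the new $\gamma$-value becomes $-\frac{(\alpha+1+1/2)\pi}{2} = \gamma - \frac{\pi}{2}$, and the new amplitude is $\frac{1}{\sqrt\pi}[\sin(\theta/2)]^{-\alpha-3/2}[\cos(\theta/2)]^{-\beta-3/2}$. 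Thus $P_{n-1}^{(\alpha+1,\beta+1)}(\cos\theta) = \frac{k_1(\theta)}{\sqrt{n}}\cos(N\theta + \gamma - \pi/2) + O(n^{-3/2})$, and the phase shift turns the cosine into $\sin(N\theta+\gamma)$ via $\cos(u-\pi/2)=\sin u$. This sign/phase bookkeeping is the one place to be careful.

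\smallskip

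\emph{Step 3: Assemble and divide.} Multiply the leading amplitude of $P_{n-1}^{(\alpha+1,\beta+1)}(\cos\theta)$ by $\frac{n+\alpha+\beta+1}{2}\,P_m^{(\alpha,\beta)}(\cos\theta)$ and divide by $C(n)$. The powers of $n$ combine as $\frac{n \cdot n^{-1/2}}{n^2} = n^{-3/2}$, matching \eqref{TheAsymp}. Collecting the constants — the factor $\tfrac12$, the reciprocal of $(1-x^2)w(x)$ that was moved to the left of \eqref{help1}, and the amplitude $k_1(\theta)$ with exponents $-\alpha-3/2,\,-\beta-3/2$ — against $(1-\cos^2\theta)=\sin^2\theta=4\sin^2(\theta/2)\cos^2(\theta/2)$ and $w(\cos\theta)=(1-\cos\theta)^\alpha(1+\cos\theta)^\beta = 2^{\alpha+\beta}[\sin(\theta/2)]^{2\alpha}[\cos(\theta/2)]^{2\beta}$ should produce exactly the stated amplitude $\ell(\theta)=\frac{2^{\alpha+\beta+1}}{\sqrt\pi}[\sin(\theta/2)]^{\alpha+1/2}[\cos(\theta/2)]^{\beta+1/2}P_m^{(\alpha,\beta)}(\cos\theta)$. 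The main obstacle is not conceptual but the error-propagation check: one must confirm that replacing each asymptotic factor by its leading term, multiplying, and dividing by $C(n)$ genuinely yields a cumulative error of $O(n^{-5/2})$ rather than something larger — in particular verifying that the $O(n^{-3/2})$ remainder in $P_{n-1}^{(\alpha+1,\beta+1)}$, once multiplied by $n$ and divided by $n^2$, lands in $O(n^{-5/2})$, and that the constant-coefficient second term is controlled as noted in Step~2.
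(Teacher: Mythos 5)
Your proposal is correct and follows essentially the same route as the paper's own proof: both start from \eqref{help1} with $x=\cos\theta$, apply Proposition~\ref{mourad} to $P_{n-1}^{(\alpha+1,\beta+1)}$ (noting the unchanged $N$ and shifted phase $\gamma-\pi/2$, whence the sine) and to $P_n^{(\alpha,\beta)}$, and then do the same bookkeeping with $(1-x^2)w(x)=2^{\alpha+\beta+2}[\sin(\theta/2)]^{2\alpha+2}[\cos(\theta/2)]^{2\beta+2}$ to obtain $\ell(\theta)$ and the $O(n^{-5/2})$ error. Your error-propagation checks in Steps 2--3 match what the paper does implicitly, so there is nothing to fix.
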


\begin{proof}
For convenience, let us set 
\[
I_n = \displaystyle \int_{\cos\theta}^1 P_n^{(\alpha,\beta)}(t)P_m^{(\alpha,\beta)}(t)\, w(t)dt.
\] 
Next, applying Proposition~\ref{mourad} to $P_{n-1}^{(\alpha+1,\beta+1)}$ and $P_n^{(\alpha,\beta)}$, we arrive at
\begin{equation} \label{mouradapplication1}
    P_{n-1}^{(\alpha+1,\beta+1)}(x) = \frac{[\sin(\theta/2)]^{-\alpha - 3/2}[\cos(\theta/2)]^{-\beta -3/2}}{\sqrt{(n-1)\pi}}\cos(N\theta + \gamma -\pi/2)+O(n^{-3/2})
\end{equation}
and 
\begin{equation} \label{mouradapplication2}
    P_n^{(\alpha,\beta)}(x) = \frac{[\sin(\theta/2)]^{-\alpha -1/2}[\cos(\theta/2)]^{-\beta - 1/2}}{\sqrt{n \pi}}\cos(N\theta + \gamma)+ O(n^{-3/2}),
\end{equation}
as $n\to\infty$ and where we set $x=\cos\theta$. Observe that $\cos(N\theta + \gamma - \pi/2) = \sin(N\theta + \gamma)$ and so \eqref{mouradapplication1} takes the form
\begin{equation} \label{mouradapplication12}
    P_{n-1}^{(\alpha+1,\beta+1)}(x) = \frac{[\sin(\theta/2)]^{-\alpha - 3/2}[\cos(\theta/2)]^{-\beta-3/2}}{\sqrt{(n-1)\pi}}\sin(N\theta + \gamma)+O(n^{-3/2}).
\end{equation}
Now, combing \eqref{mouradapplication2} and \eqref{mouradapplication12} with \eqref{help1}, we get
\begin{equation}\label{help2}
\begin{split}
\frac{(n(n+\alpha+\beta+1)-m(m+\alpha+\beta+1))I_n}{(1-x^2)w(x)}[\sin(\theta/2)]^{\alpha +3/2}[\cos(\theta/2)]^{\beta+3/2}=\\
\frac{n+\alpha+\beta+1}{2\sqrt{(n-1)\pi}}P_m^{(\alpha,\beta)}(x)\sin(N\theta + \gamma)+O(n^{-1/2}).
\end{split}
\end{equation}
Then, since $x = \cos \theta$, the weight function can be rewritten in the following way
\[
w(x) = (1-\cos\theta)^{\alpha}(1+\cos\theta)^{\beta} = 2^{\alpha+\beta}[\sin(\theta/2)]^{2\alpha}[\cos(\theta/2)]^{2\beta}. 
\]
Also, we have $1-x^2 = \sin^2\theta = 4[\sin(\theta/2)]^2[\cos(\theta/2)]^2$. As a result,  \eqref{help2} reduces to
\begin{equation*}
\begin{split}
\frac{(n(n+\alpha+\beta+1)-m(m+\alpha+\beta+1))I_n}{2^{\alpha+\beta+2}[\sin(\theta/2)]^{\alpha +1/2}[\cos(\theta/2)]^{\beta+1/2}}=
\frac{\sqrt{n}}{2\sqrt{\pi}}P_m^{(\alpha,\beta)}(x)\sin(N\theta + \gamma)\\+O(n^{-1/2}),
\end{split}
\end{equation*}
which yields \eqref{TheAsymp}.
\end{proof}

\noindent {\bf Acknowledgments.} M.D. was supported by the NSF DMS grant 2008844 and by the University of Connecticut Research Excellence Program.


\begin{thebibliography}{99}

\bibitem{B19}
N. Bonneux,
\textit{Exceptional Jacobi polynomials},  
J. Approx. Theory 239 (2019), 72--112.

\bibitem{B59}
W. E. Byerly, 
\textit{An elementary treatise on Fourier's series and spherical, cylindrical, and ellipsoidal harmonics, with applications to problems in mathematical physics}, Dover Publications, Inc., New York, 1959.

\bibitem{DG}
 M. Derevyagin, J. S. Geronimo, \textit{The entries of a refinement equation and a generalization of the discrete wave equation}, arXiv:2001.10650.

\bibitem{GM15}
     J.S. Geronimo, F. Marcell\'an, 
\textit{On Alpert multiwavelets}. Proc. Amer. Math. Soc. 143 (2015), no. 6, 2479--2494.

\bibitem{I09}
M. E. H. Ismail, \textit{Classical and quantum orthogonal polynomials in one variable}. With two chapters by Walter Van Assche. With a foreword by Richard A. Askey. Reprint of the 2005 original. Encyclopedia of Mathematics and its Applications, 98. Cambridge University Press, Cambridge, 2009

\bibitem{KS60}
S. Karlin, G. Szeg\H{o},
\textit{On certain determinants whose elements are orthogonal polynomials}, 
J. Analyse Math. 8 (1960/61), 1--157.

\bibitem{MS91}
V. B. Matveev, M. A. Salle, \textit{Darboux transformations and solitons}. Springer Series in Nonlinear Dynamics. Springer-Verlag, Berlin, 1991.

\end{thebibliography}
\end{document}